\documentclass{amsart}
\usepackage{multirow}
\usepackage{fullpage}
\usepackage{graphicx}
\usepackage{longtable}
\usepackage{amssymb}
\usepackage{hyperref}

\parindent=0cm

\newcommand{\ZZ}{\mathbb{Z}}
\newcommand{\QQ}{\mathbb{Q}}

\newcommand{\FF}{\mathbb{F}}
\newcommand{\PP}{\mathbb{P}}
\newcommand{\OO}{\mathcal{O}}
\newcommand{\p}{\mathfrak{p}}
\newcommand{\lra}{\longrightarrow}
\newcommand{\Frob}{\mathop{\rm Frob}}
\DeclareMathOperator{\tr}{tr}
\long\def\ppp#1||{\parbox{13cm}{\rule[3.7mm]{0mm}{1pt}$#1$\rule[-1.8mm]{0mm}{1pt}}}
\theoremstyle{plain}
\newtheorem{thm}{Theorem}

\newtheorem{prop}[thm]{Proposition}
\newtheorem{remark}[thm]{Remark}

\numberwithin{thm}{section}
\numberwithin{equation}{section}

\begin{document}

\title{Hilbert modularity of some double octic Calabi--Yau threefolds}

\subjclass[2010]{Primary 14J32; 
Secondary 14J27, 14J17}

\keywords{Calabi-Yau threefold, Hilbert modularity, double octic, Faltings--Serre--Livn\'e method}

\author{S\l awomir Cynk}
\address{Institute of Mathematics, Jagiellonian University, ul. \L
ojasiewicza 6,30-348 Krak\'ow,
Poland
}
\email{slawomir.cynk@im.uj.edu.pl}

\author{Matthias Sch\"utt}
\address{Institut f\"ur Algebraische Geometrie, 
Leibniz Universit\"at  Hannover, Welfengarten 1, 30167 Hannover, Germany
\newline\indent
Riemann Center for Geometry and Physics, 
  Leibniz Universit\"at Hannover, 
  Appelstrasse 2, 30167 Hannover, Germany}
\email{schuett@math.uni-hannover.de}


\author{Duco van Straten}
\address{Institut f\"ur Mathematik,
FB 08 -- Physik, Mathematik und Informatik,
Johannes Gutenberg-Universit\"at,
Staudingerweg 9,
55128 Mainz}
\email{straten@mathematik.uni-mainz.de}

\thanks{The first named author was partially supported by the National Science
  Center grant no. 2014/13/B/ST1/00133. This
  research was supported in part by PLGrid Infrastructure.
  Partial funding by the grant 346300 for IMPAN from
  the Simons Foundation and the matching 2015-2019 Polish MNiSW fund
  is gratefully acknowledged.
}

\date{October 10, 2018}

\begin{abstract}
We exhibit three double octic Calabi--Yau threefolds
over the quadratic fields $\QQ(\sqrt 2), \QQ(\sqrt 5), \QQ(\sqrt{-3})$
and prove their modularity.
The non-rigid threefold $\QQ(\sqrt 2)$ has two conjugate Hilbert modular forms 
of weight $[4,2]$ and $[2,4]$ attached
while the two rigid threefolds correspond to a Hilbert modular form of weight $[4,4]$
and to the twist of the restriction of a classical modular form of weight $4$.
\end{abstract}

\maketitle

\section{Introduction}
\label{s:intro}

After the modularity of elliptic curves over $\QQ$ was proven,
much emphasis has been put on Calabi--Yau threefolds.
Indeed, for rigid Calabi--Yau threefolds over $\QQ$,
the modularity has been established in the meantime 
independently in \cite{D}, \cite{GY}.
Over number fields other than $\QQ$, however,
there seems to be only one (Hilbert) modular example to date (outside the CM case),
the Consani--Scholten quintic from \cite{CoS}, \cite{DPS}.

This paper will provide three more examples of modular Calabi-Yau threefolds,
each of which is defined over some quadratic field $K$.
In detail, we will exhibit
\begin{enumerate}
\item
a non-rigid Calabi--Yau threefold $X$ over $\QQ$ (with $b_3(X)=4$)
admitting an endomorphism over $\QQ(\sqrt 2)$
which can be used to split $H^3(X)$ into two 2-dimensional eigenspaces;
the corresponding Galois representations are proved to correspond to
a Hilbert modular form over $\QQ(\sqrt 2)$ (Theorem \ref{thm1}, Remark \ref{rem'});
\item
a rigid Calabi--Yau threefold $Y$ over $\QQ(\sqrt 5)$
such that the Galois representation on $H^3(Y)$
corresponds to a Hilbert modular form over $\QQ(\sqrt 5)$ of weight $[4,4]$ (Theorem \ref{thm2});
\item
a rigid Calabi--Yau threefold $Z$ over $\QQ(\sqrt{-3})$
such that the Galois representation on $H^3(Z)$
corresponds to a twist of the restriction of a classical modular form of weight $4$ and level $72$
(Theorem \ref{thm3}).
\end{enumerate}
The Calabi--Yau threefolds will be constructed as crepant resolutions
of certain double octics.
More precisely, we will choose the branch locus to consist of 8 distinct planes.
The construction, following \cite{Mey}, will be reviewed in Section \ref{s:octics}.
Then the proof of the mentioned results amounts to comparing
the two-dimensional Galois representations attached
to the motive of the third cohomology of the Calabi--Yau threefolds (or a given submotive in the first case)
on the one hand
and to the Hilbert or classical modular forms in question on the other.
In practice, this can be achieved by working with the $2$-adic representations
and applying a method going back to Faltings and Serre and worked out in detail by Livn\'e in \cite{L}.
This technique will be explained, with a view towards the given base fields, 
in Section \ref{s:Galois}.
In essence, it reduces the proof of modularity
to comparing a few traces and determinants of the Galois representations at certain Frobenius elements;
these, in turn, can be obtained from extensive point counting
using the Lefschetz fixed point formula,
and from Hilbert modular forms calculations as incorporated in MAGMA.

\subsection*{Acknowledgements}

We are indebted to John Voight for very helpful explanations.

\section{Faltings--Serre--Livn\'e method}

\label{s:Galois}

In this section we study special cases of \cite[Thm. 4.3.]{L};
they will be instrumental in establishing  the modularity of the three two-dimensional
Galois representations attached to the Calabi--Yau threefolds $X, Y, Z$
to be introduced in the next sections.

Throughout this section, 
the set-up comprises continuous two-dimensional $2$-adic Galois representations
of the absolute Galois group of a specified  number field $K$
which are
unramified outside a given finite set $S$ of prime ideals in the integer ring
$\mathcal O_{K}$ of $K$.  
For simplicity, we list prime ideals just by a single generator.
The norms will be included in the tables to follow.

\begin{prop}\label{prop1.1}
Let $K=\QQ[\sqrt2]$ and $E=\QQ_{2}[\sqrt2]$
and let $\mathcal P:=\sqrt2\,\ZZ_{2}$ be the maximal
ideal of the ring of integers of $E$. Let $S:=\{\sqrt{2}, 3\}$ and
\begin{eqnarray*}
T&=&\{5, 11, \sqrt2+3,\
\sqrt2-3,\
3\sqrt2-1,\
\sqrt2+5,\
\sqrt2-5,\
4\sqrt2-1,\
4\sqrt2+1,\
5\sqrt{2}-3,\
\sqrt2-7,\
\sqrt2+7,\\
&&4\sqrt2-11,\
1-7\sqrt{2}\}\\
  U&=&\{5, 11, 13,\ 
\sqrt2-3,\
 3\sqrt2-1,\
\sqrt2-5,\
4\sqrt2-1,\
5\sqrt2-3
\}
\end{eqnarray*}
be two sets of primes in $\mathcal O_{K}$.
Suppose that $\rho_{1},\rho_{2}:\operatorname{Gal}(\bar
K/K)\lra\operatorname{GL}_{2}(E)$ are continuous Galois
representations unramified outside $S$ 
and satisfying 
\begin{enumerate}
\item [1.] 
$\operatorname{Tr}(\rho_{1}(Frob_{\mathfrak p})) \equiv
  \operatorname{Tr}(\rho_{2}(Frob_{\mathfrak p})) \equiv 0
  \pmod{\mathcal P}$  for
  $\mathfrak p\in U$,
\item [2.]
  $\det(\rho_{1}) \equiv
  \det(\rho_{2})  \pmod{\mathcal P}$,
\item [3.] $\operatorname{Tr}(\rho_{1}(Frob_{\mathfrak p})) =
  \operatorname{Tr}(\rho_{2}(Frob_{\mathfrak p})) $
and
  $\det(\rho_{1}(Frob_{\mathfrak p})) =
  \det(\rho_{2}(Frob_{\mathfrak p})) $
  for $\mathfrak p\in T$.
\end{enumerate}
Then $\rho_{1}$ and $\rho_{2}$ have isomorphic semisimplifications.
\end{prop}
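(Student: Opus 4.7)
The strategy is to derive Proposition \ref{prop1.1} as an explicit instance of Livn\'e's Theorem 4.3 in \cite{L}, applied to $K=\QQ(\sqrt 2)$, $E=\QQ_{2}(\sqrt 2)$ and ramification set $S=\{\sqrt 2,3\}$. That theorem reduces the comparison of the semisimplifications of $\rho_{1}$ and $\rho_{2}$ to a finite sequence of trace and determinant verifications at Frobenius elements, provided that the test sets $U$ and $T$ satisfy suitable Chebotarev-style covering conditions with respect to two finite $2$-extensions of $K$.

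First I would identify the compositum $K_{S}$ of all quadratic extensions of $K$ unramified outside $S$. Since $\OO_{K}=\ZZ[\sqrt 2]$ has class number one with unit group $\{\pm1\}\times\langle1+\sqrt 2\rangle$, and $3$ is inert in $K$, Kummer theory gives $\operatorname{Gal}(K_{S}/K)\cong(\ZZ/2)^{r}$ for an explicit small $r$, with $K_{S}$ generated over $K$ by the square roots of a basis of $S$-units modulo squares (roughly $-1,\ 1+\sqrt 2,\ \sqrt 2,\ 3$). I would then check, for every nontrivial character $\chi:\operatorname{Gal}(K_{S}/K)\to\{\pm1\}$, that some $\mathfrak p\in U$ satisfies $\chi(\Frob_{\mathfrak p})=-1$. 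This is the surjectivity condition of Livn\'e needed to pin down the mod-$\mathcal P$ residual representations.

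Combined with hypotheses (1) and (2), this sufficiency of $U$ forces the residual representations $\bar\rho_{1}$ and $\bar\rho_{2}$ (with coefficients in the residue field $\OO_{E}/\mathcal P=\FF_{2}$) to share the same semisimplification: their determinants agree mod $\mathcal P$ and their traces both vanish on a set of Frobenius classes that surjects onto $\operatorname{Gal}(K_{S}/K)$, which for two-dimensional $\FF_{2}$-representations is exactly the data needed to identify them. The Faltings--Serre lifting argument then measures the obstruction to $\rho_{1}\sim\rho_{2}$ by a cohomology class controlled by a further elementary abelian $2$-extension $L/K_{S}$, unramified outside $S$. The role of $T$ is to surject onto the nontrivial classes of $\operatorname{Gal}(L/K)$, so that hypothesis (3) kills this obstruction and yields $\rho_{1}\sim\rho_{2}$ semisimply.

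The main obstacle is the explicit enumeration: one must be sure no quadratic extension of $K$ unramified outside $S$ has been omitted, so that the list of characters $\chi$ to be covered by $U$ is complete, and likewise that the larger extension $L/K$ used for the $T$-covering is correctly identified. Once the rank of $\operatorname{Gal}(K_{S}/K)$ and $\operatorname{Gal}(L/K)$ is determined, verifying that the specific Frobenius elements at the primes listed in $U$ and $T$ really do surject onto the required conjugacy classes reduces to a finite, mechanical computation of Legendre-type symbols in $K$, after which the proposition follows from \cite[Thm.~4.3]{L}.
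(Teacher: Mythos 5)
Your overall strategy --- reduce to Livn\'e's \cite[Thm.~4.3]{L} over $K=\QQ(\sqrt2)$ with $K_S$ generated by square roots of $-1$, the fundamental unit, $\sqrt2$ and $3$, and check a covering condition on Frobenius elements --- is the right framework, and your description of $K_S$ and of the final check on $T$ is essentially workable. But there is a genuine gap in how you treat the set $U$. Livn\'e's theorem has as a hypothesis that $\operatorname{Tr}\rho_1\equiv\operatorname{Tr}\rho_2\equiv 0\pmod{\mathcal P}$ at \emph{all} Frobenius elements, and condition~1 only gives this at the finitely many primes in $U$. Your proposal claims that vanishing of traces on a set of Frobenius classes surjecting onto the nontrivial characters of $\operatorname{Gal}(K_S/K)$ already identifies the residual representations; this is false. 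The danger is that $\bar\rho_i$ has image $C_3$ or $S_3$ inside $\operatorname{GL}_2(\FF_2)$, in which case the elements of odd trace are exactly those of order $3$, and the extension of $K$ they cut out is a cubic (or its $S_3$-closure), not a multiquadratic one --- so no amount of checking quadratic characters at primes of $U$ can detect it. What is actually required is to enumerate all cubic extensions $M/K$ unramified outside $S$ (equivalently, all degree-$6$ fields over $\QQ$ unramified outside $\{2,3\}$ containing $\QQ(\sqrt2)$; the Jones--Roberts database \cite{JR} lists $398$ candidates, of which $25$ contain $\QQ(\sqrt2)$) and to exhibit, for each one, a prime $\mathfrak p\in U$ whose Frobenius has order $3$ in the corresponding $C_3$ or $S_3$ extension, i.e.\ modulo which the cubic factor stays irreducible. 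Then an odd trace somewhere would force an odd trace at that $\mathfrak p\in U$, contradicting condition~1. This enumeration is the heart of the proof and is absent from, indeed incompatible with, your proposed use of $U$.

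A secondary inaccuracy: once evenness of all traces is in hand, Livn\'e's theorem does not ask that $T$ surject onto the nontrivial classes of some further elementary abelian $2$-extension $L/K_S$; it asks that the image of $\{\operatorname{Frob}_{\mathfrak p}:\mathfrak p\in T\}$ in $\operatorname{Gal}(K_S/K)\cong(\ZZ/2)^4$ be a \emph{non-cubic} subset in the sense of \cite[Def.~4.1]{L}. That is the finite check to perform with the character table at the primes of $T$ (here the image consists of $14$ distinct nonzero vectors of $(\ZZ/2)^4$, which suffices). As stated, your covering condition for $T$ is not the hypothesis of the theorem you are invoking, so the final step would also need to be repaired.
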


\begin{proof}
  Following the arguments of \cite{L} we  first verify that
  assumption  1. implies that $\operatorname{Tr}(\rho_{1}) \equiv
  \operatorname{Tr}(\rho_{2}) \equiv 0
  \pmod{\mathcal P}$. 
  Indeed, suppose that  $\operatorname{Tr}(\rho_{i})
  \not\equiv 0\pmod{\mathcal P}$ and denote by  $L/K$
  the Galois extension cut out by the kernel
  $\operatorname{Ker}{\bar{\rho}_{i}}$ of the reduction
  $\bar{\rho}_{i}$ of $\rho_{i}$ modulo $\mathcal P$.  
  By inspection, we have im$(\bar\rho_i)\subseteq$ GL$_2(\FF_2)$
  where the elements of odd trace are exactly those of order $3$ 
  (which by assumption will correspond to some Frobenius elements).
  Hence, the Galois group of the extension $L/K$ is isomorphic to $S_{3}$ or
   $C_{3}$, so it is the Galois closure of a degree 3 extension
   $M/K$. 

   Then $M$
   is a degree 6 extension of $\mathbb Q$ unramified outside
   $\{2,3\}$. The database \cite{JR} lists 398 such fields presented
   as a splitting field of a monic degree 6 polynomial with rational
   coefficients. The assumption  that $M$ contains the subfield
   $K=\QQ[\sqrt2]$ implies that the minimal polynomial of any primitive
   element of the extension $M/\QQ$ factors over $\QQ[\sqrt2]$. 
   We check that exactly 25 
    of the  398 polynomials from \cite{JR}  satisfy this condition. 
   For each of these 25 degree 6 polynomials, we determine a prime integer $p$
   such that the reduction of the degree 3 polynomial over $K$ modulo a prime $\mathfrak p $ in $\OO_K$ over
   $p$  stays irreducible over
   $\OO_K/\mathfrak p \cong \FF_{\mathfrak p}$. 
   We list these data below. 
   \bgroup\small\parindent=10mm

  \(\displaystyle {x}^{6}-2\,{x}^{3}-1 = \left( {x}^{3}+\sqrt {2}-1 \right) \times \left( {x}^{3}-\sqrt {2}-1 \right), \ p=5  \)

\(\displaystyle {x}^{6}-12\,{x}^{4}+36\,{x}^{2}-8 = \left( {x}^{3}-6\,x-2\,\sqrt {2} \right) \times \left( {x}^{3}-6\,x+2\,\sqrt {2} \right), \ p=5  \)

\(\displaystyle {x}^{6}-2 = \left( {x}^{3}+\sqrt {2} \right) \times \left( {x}^{3}-\sqrt {2} \right), \ p=7  \)

\(\displaystyle {x}^{6}-4\,{x}^{3}+2 = \left( {x}^{3}+\sqrt {2}-2 \right) \times \left( {x}^{3}-\sqrt {2}-2 \right), \ p=5  \)

\(\displaystyle {x}^{6}+6\,{x}^{4}+9\,{x}^{2}-8 = \left( {x}^{3}+3\,x+2\,\sqrt {2} \right) \times \left( {x}^{3}+3\,x-2\,\sqrt {2} \right), \ p=11  \)

\(\displaystyle {x}^{6}+6\,{x}^{4}+9\,{x}^{2}-2 = \left( {x}^{3}+3\,x-\sqrt {2} \right) \times \left( {x}^{3}+3\,x+\sqrt {2} \right), \ p=5  \)

\(\displaystyle {x}^{6}-6\,{x}^{4}-6\,{x}^{3}+12\,{x}^{2}-36\,x+1 = \left(
   {x}^{3}+3\,\sqrt {2}{x}^{2}+6\,x+2\,\sqrt {2}-3 \right)\)

\(\displaystyle   \rule{4.02cm}{0cm} \times\left( {x}^{3}-3\,\sqrt {2}{x}^{2}+6\,x-2\,\sqrt {2}-3 \right), \ p=7  \)

\(\displaystyle {x}^{6}-18 = \left( {x}^{3}-3\,\sqrt {2} \right) \times \left( {x}^{3}+3\,\sqrt {2} \right), \ p=7  \)

\(\displaystyle {x}^{6}-6\,{x}^{4}-12\,{x}^{3}+12\,{x}^{2}-72\,x+28 = \left(
   {x}^{3}-3\,\sqrt {2}{x}^{2}+6\,x-2\,\sqrt {2}-6 \right) 
   \)

   \(\displaystyle \rule{4.02cm}{0cm}\times\left( {x}^{3}+3\,\sqrt {2}{x}^{2}+6\,x+2\,\sqrt {2}-6 \right), \ p=13  \)

\(\displaystyle {x}^{6}-6\,{x}^{3}-9 = \left( {x}^{3}-3\,\sqrt {2}-3 \right) \times \left( {x}^{3}+3\,\sqrt {2}-3 \right), \ p=5  \)

\(\displaystyle {x}^{6}-6\,{x}^{4}-4\,{x}^{3}+9\,{x}^{2}+12\,x-14 =
\left( {x}^{3}-3\,x-3\,\sqrt {2}-2 \right) \times \left( {x}^{3}-3\,x+3\,\sqrt {2}-2 \right), \ p=5  \)

\(\displaystyle {x}^{6}-18\,{x}^{4}-12\,{x}^{3}+81\,{x}^{2}+108\,x+18 = \left( {x}^{3}-9\,x+3\,\sqrt {2}-6 \right) \times \left( {x}^{3}-9\,x-3\,\sqrt {2}-6 \right), \ p=5  \)

\(\displaystyle {x}^{6}+6\,{x}^{4}-4\,{x}^{3}-9\,{x}^{2}+12\,x-4 = \left(
   {x}^{3}+3\,\sqrt {2}x+3\,x-2\,\sqrt {2}-2 \right)\)

\(\displaystyle \rule{4.02cm}{0cm}  \times \left( {x}^{3}-3\,\sqrt {2}x+3\,x+2\,\sqrt {2}-2 \right), \ p=31  \)

\(\displaystyle {x}^{6}+6\,{x}^{4}-4\,{x}^{3}+9\,{x}^{2}-12\,x-4 = \left(
   {x}^{3}+3\,x+2\,\sqrt {2}-2 \right) \times \left( {x}^{3}+3\,x-2\,\sqrt {2}-2 \right), \ p=23  \)

\(\displaystyle {x}^{6}-6\,{x}^{4}-4\,{x}^{3}+9\,{x}^{2}+12\,x-4 = \left( {x}^{3}-3\,x-2\,\sqrt {2}-2 \right) \times \left( {x}^{3}-3\,x+2\,\sqrt {2}-2 \right), \ p=5  \)

\(\displaystyle {x}^{6}-12\,{x}^{3}+18 = \left( {x}^{3}+3\,\sqrt {2}-6 \right) \times \left( {x}^{3}-3\,\sqrt {2}-6 \right), \ p=5  \)

\(\displaystyle {x}^{6}-12\,{x}^{3}-36 = \left( {x}^{3}+6\,\sqrt {2}-6 \right) \times \left( {x}^{3}-6\,\sqrt {2}-6 \right), \ p=5  \)

\(\displaystyle {x}^{6}-6\,{x}^{4}-4\,{x}^{3}-9\,{x}^{2}-12\,x-4 = \left(
   {x}^{3}-3\,\sqrt {2}x-3\,x-2\,\sqrt {2}-2 \right) \times\)

  \(\displaystyle \rule{4.02cm}{0cm}\left( {x}^{3}+3\,\sqrt {2}x-3\,x+2\,\sqrt {2}-2 \right), \ p=41  \)

\(\displaystyle {x}^{6}-8\,{x}^{3}-18\,{x}^{2}-48\,x-16 = \left( {x}^{3}-3\,\sqrt
   {2}x-4\,\sqrt {2}-4 \right)\times \left( {x}^{3}+3\,\sqrt {2}x+4\,\sqrt {2}-4 \right), \ p=7  \)

\(\displaystyle {x}^{6}+6\,{x}^{4}-12\,{x}^{3}+9\,{x}^{2}-36\,x+28 = \left( {x}^{3}+3\,x-2\,\sqrt {2}-6 \right) \times \left( {x}^{3}+3\,x+2\,\sqrt {2}-6 \right), \ p=17  \)

\(\displaystyle {x}^{6}-8\,{x}^{3}-18\,{x}^{2}+24\,x+8 = \left( {x}^{3}-3\,\sqrt {2}x+2\,\sqrt {2}-4 \right) \times \left( {x}^{3}+3\,\sqrt {2}x-2\,\sqrt {2}-4 \right), \ p=13  \)

\(\displaystyle {x}^{6}-16\,{x}^{3}-18\,{x}^{2}+48\,x+32 = \left( {x}^{3}+3\,\sqrt {2}x-4\,\sqrt {2}-8 \right) \times \left( {x}^{3}-3\,\sqrt {2}x+4\,\sqrt {2}-8 \right), \ p=11  \)

\(\displaystyle {x}^{6}-18\,{x}^{4}-36\,{x}^{3}-81\,{x}^{2}-108\,x+36 = \left(
   {x}^{3}-9\,\sqrt {2}x-9\,x-12\,\sqrt {2}-18 \right) \)

  \(\displaystyle \rule{4.02cm}{0cm}\times \left( {x}^{3}+9\,\sqrt {2}x-9\,x+12\,\sqrt {2}-18 \right), \ p=11 \)

\(\displaystyle {x}^{6}-18\,{x}^{4}-12\,{x}^{3}+81\,{x}^{2}+108\,x-36 = \left( {x}^{3}-9\,x-6\,\sqrt {2}-6 \right) \times \left( {x}^{3}-9\,x+6\,\sqrt {2}-6 \right), \ p=11  \)

\(\displaystyle {x}^{6}-18\,{x}^{4}-36\,{x}^{3}+81\,{x}^{2}+324\,x+252 = \left( {x}^{3}-9\,x-6\,\sqrt {2}-18 \right) \times \left( {x}^{3}-9\,x+6\,\sqrt {2}-18 \right), \ p=11\)  

\egroup
Given $M$ and $\p$ as above, it follows 
that any element in the conjugacy class of   $\operatorname{Frob}_{\mathfrak p}$ 
in   $\operatorname{Gal}(L/K)$ 
has order 3;
consequently
   Tr$(\rho_i(\operatorname{Frob}_{\mathfrak p}))\equiv 1\pmod{\mathcal P}$, 
   contradicting our assumptions.
Thus we see that the set $U$ was indeed chosen in such a way that
condition 1. implies that
$$
\operatorname{Tr}(\rho_{1})\equiv \operatorname{Tr}(\rho_{2})\equiv
0\pmod{\mathcal P}.
$$
The  traces being even is the key ingredient to apply  \cite[Thm.~4.3]{L}.
To this end, let $K_{S}$ be the compositum of all quadratic extensions of $K$
unramified outside $S$.
Since the ring $\mathcal O_{K}$ is a unique factorization domain, 
the compositum $K_S$ is obtained by extracting square roots
of generators of $\mathcal
O_{K}^{*}$ and 
of prime elements $\alpha\in\mathcal O_{K}$ with norm $N_{K}(\alpha)$
divisible only by elements of $S$.  
Presently, generators of $K_S/K$ can be taken as
$$\sqrt{-1}, \sqrt[4]2, \sqrt{\sqrt 2-1}, \sqrt 3.
$$
One computes the table of quadratic characters
$\operatorname{Gal}(K_{S}/K)\lra (\ZZ/2)^{4}$ at the primes from $T$ as follows:
\[
  \begin{array}{r|r|c|c|c|c||r|r|c|c|c|c}
    \mathfrak p&N(\mathfrak p)&\sqrt{2}&\ 3\ &\ -1\
    &\sqrt{2}-1&\mathfrak p&N(\mathfrak p)&\sqrt{2}&\ 
3\ &\ -1\ &\sqrt{2}-1\\\hline
5&25&1&0&0&0&4\sqrt2-1&31&0&1&1&0\\
11&121&0&0&0&1
&4\sqrt2+1&31&1&1&1&1\\
\sqrt2+3&7&0&1&1&1

&5\sqrt2-3&41&1&1&0&0\\
\sqrt2-3&7&1&1&1&0
&\sqrt2-7&47&0&0&1&0\\
3\sqrt2-1&17&1&1&0&1&\sqrt2+7&47&1&0&1&1\\
\sqrt2+5&23&0&0&1&1&4\sqrt2-11&89&0&1&0&1\\
\sqrt2-5&23&1&0&1&0&1-7\sqrt2&97&1&0&0&1\\
  \end{array}
\]
From the  table we infer that 
the image of the Frobenius elements $\operatorname{Frob}_{t}, t\in T$,
contains 14 different non-zero elements, hence it is  non--cubic
in the terminology of \cite[Def.\ 4.1]{L}
(see e.g.\ \cite[p.~53]{S-Dipl}). 
Thus the assertion that
 the Galois representations $\rho_1, \rho_2$ 
 have isomorphic semisimplifications
follows from \cite[Thm.~4.3]{L}. 
\end{proof}

Note in particular that Proposition \ref{prop1.1}
implies that $\rho_1, \rho_2$ 
have the same $L$-series,
a feature which will be centrally used in the proof of the modularity results stated in the introduction.
The next two propositions concern the same kind of problem
for different base fields and adjusted ramification sets.
Since the arguments are very similar,
we give only the essential ingredients.

\begin{prop}
\label{prop2}
Let $K=\QQ[\sqrt5]$, $E=\QQ_{2}$
and let $\mathcal P:=2\ZZ_{2}$ be the maximal
ideal of the ring of integers of $E$. Let $S:=\{2\}$
and
\begin{eqnarray*}
T&=&\{3, 13, 
\sqrt5+4, 2\sqrt5+7, 
     \sqrt5+6, \sqrt5-6, 2\sqrt5+9\}
\end{eqnarray*}
be two sets of primes in $\mathcal O_{K}$.
Suppose that $\rho_{1},\rho_{2}:\operatorname{Gal}(\bar
K/K)\lra\operatorname{GL}_{2}(E)$ are continuous Galois
representations unramified outside $S$ and
satisfying 
\begin{enumerate}
\item [1.] $\operatorname{Tr}(\rho_{1}(Frob_{3})) \equiv
  \operatorname{Tr}(\rho_{2}(Frob_{3})) \equiv 0
  \pmod{\mathcal P}$,
\item [2.]
  $\det(\rho_{1}) \equiv
  \det(\rho_{2})  \pmod{\mathcal P}$,
\item [3.] $\operatorname{Tr}(\rho_{1}(Frob_{\mathfrak p})) =
  \operatorname{Tr}(\rho_{2}(Frob_{\mathfrak p})) $
and
  $\det(\rho_{1}(Frob_{\mathfrak p})) =
  \det(\rho_{2}(Frob_{\mathfrak p}))  $ for
  $\mathfrak p\in T$.
\end{enumerate}
Then $\rho_{1}$ and $\rho_{2}$ have isomorphic semisimplifications.
\end{prop}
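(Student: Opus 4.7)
The plan is to follow the template of Proposition~\ref{prop1.1} closely, adjusted to the arithmetic of $K=\QQ(\sqrt 5)$. Note that $2$ is inert in $\mathcal O_K$ (the minimal polynomial $x^{2}-x-1$ of the ring-of-integers generator $\tfrac{1+\sqrt5}{2}$ is irreducible modulo $2$), so $S$ consists of a single prime, and the coefficient ring simplifies to $E=\QQ_{2}$ with $\mathcal P=2\ZZ_{2}$.

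The first step is to deduce from condition 1.\ that both $\operatorname{Tr}(\rho_i)$ vanish modulo $\mathcal P$ identically. As before, if this fails then the fixed field $L$ of $\ker(\bar\rho_i)$ is a $C_3$- or $S_3$-extension of $K$, whose unique cubic subextension $M/K$ gives rise to a sextic number field $M/\QQ$ unramified outside $\{2,5\}$ and containing the quadratic subfield $\QQ(\sqrt5)$. I would enumerate all such sextic fields from the Jones--Roberts database \cite{JR} (equivalently, the LMFDB), retain only those containing $\QQ(\sqrt 5)$, and for each surviving candidate exhibit a defining polynomial whose cubic factor over $K$ stays irreducible modulo $(3)$. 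This forces $\operatorname{Frob}_{3}$ to act with order~$3$ in $\operatorname{Gal}(L/K)$, so $\operatorname{Tr}(\bar\rho_i(\operatorname{Frob}_{3}))\equiv 1\pmod{\mathcal P}$, contradicting assumption~1.

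Next I would determine the compositum $K_S$ of quadratic extensions of $K$ unramified outside $\{2\}$. Since $\mathcal O_K$ is a PID with unit group $\{\pm 1\}\times\langle\varepsilon\rangle$ for the fundamental unit $\varepsilon=\tfrac{1+\sqrt5}{2}$, and since $2\in\mathcal O_K$ itself generates the prime above $2$, the Kummer generators are $-1$, $\varepsilon$ and $2$, so
\[
K_S \;=\; K\bigl(\sqrt{-1},\ \sqrt{\varepsilon},\ \sqrt{2}\bigr),\qquad \operatorname{Gal}(K_S/K)\cong(\ZZ/2)^{3}.
\]
I would then tabulate the values of the three associated quadratic characters at the Frobenius elements $\operatorname{Frob}_\mathfrak p$ for $\mathfrak p\in T$ and verify by direct inspection that the image in $(\FF_2)^{3}\setminus\{0\}$ is \emph{non-cubic} in the sense of \cite[Def.~4.1]{L} (cf.\ \cite[p.~53]{S-Dipl}); the seven primes in $T$ should be precisely calibrated for this to hold.

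Once even traces and non-cubicity are in place, assumptions~2 and~3 put us in the hypotheses of \cite[Thm.~4.3]{L}, which yields the desired isomorphism of semisimplifications. The main obstacle is Step~1: a single prime condition on $\operatorname{Frob}_{3}$ must suffice to eliminate \emph{every} sextic candidate from the database, so the database lookup has to be carried out very carefully, and if any candidate survives the list of primes in condition~1 would need to be enlarged. The remaining steps are routine bookkeeping in the style of Proposition~\ref{prop1.1} and \cite{L}.
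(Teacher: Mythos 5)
Your proposal follows exactly the paper's argument: the database lookup does go through as you hope (of the 106 sextic fields unramified outside $\{2,5\}$ in \cite{JR}, only one contains $\QQ(\sqrt5)$, namely the splitting field of $x^{6}-2x^{5}-2x-1$, and both of its cubic factors over $K$ are irreducible modulo $3$, so the single prime in condition~1 suffices), and your Kummer generators $-1,\varepsilon,2$ coincide with the paper's $-1,\ \tfrac12(\sqrt5-1)=\varepsilon^{-1},\ 2$, with the character table showing the Frobenii at $T$ hit all of $(\ZZ/2)^{3}\setminus\{0\}$. This is essentially identical to the published proof.
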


\begin{proof}
By \cite{JR} there are 106 degree 6 extensions of $\QQ$ unramified
outside $\{2,5\}$. Only one of them contains $\QQ[\sqrt5]$; it is the
splitting field of the following polynomial:
  \[{x}^{6}-2\,{x}^{5}-2\,x-1=
 \left({x}^{3}-x^{2}+\tfrac12(-1+\sqrt {5})x+\tfrac12(-1+\sqrt {5})
 \right)
 \left( {x}^{3}-x^{2}-\tfrac12(1+\sqrt {5})x-\tfrac12(1+\sqrt {5}) \right)\]
As each  cubic polynomial over $\ZZ[\sqrt{5}]$
is irreducible modulo $3$,
 assumption 1. implies that
$\operatorname{Tr}(\rho_{1}) \equiv \operatorname{Tr}(\rho_{2}) \equiv
0 \pmod{\mathcal P}$ as required.

The compositum $K_{S}$ of quadratic extensions of $K$ unramified outside $S$
is obtained from the three quadratic extensions $K[\sqrt2],
K[\sqrt{-1}], K[\sqrt{\tfrac12(\sqrt5-1)}]$.
The table of characters is computed as follows:
  \[\begin{array}{r|r|c|c|c||r|r|c|c|c}
  \mathfrak p&N(\mathfrak p)&\ \ 2\ \ &\ -1\ &\tfrac12(\sqrt5-1)&
  \mathfrak p&N(\mathfrak p)&\ \ 2\ \ &\ -1\
    &\tfrac12(\sqrt5-1)\\\hline
   3&9&1&1&0&
\sqrt5+6&31&1&0&0   
   \\
13&169&1&1&1&
\sqrt5-6&31&1&0&1
\\
\sqrt5+4&11&0&0&1
&
2\sqrt5+9&61&0&1&0
\\
2\sqrt5+7&29&0&1&1
&&&&&
  \end{array}
\]
From the table, we infer that the image 
of the Frobenius elements $\operatorname{Frob}_{t}, t\in T$
 equals $(\ZZ/2)^{3}\setminus\{0\}$; in particular, it
is non-cubic, and the Proposition follows from \cite[Thm. 4.3]{L}
as before.
\end{proof}

\begin{prop}\label{prop1.3}
Let $K=\QQ[\sqrt{-3}]$, $E=\QQ_{2}$
and let $\mathcal P:=2\ZZ_{2}$ be the maximal
ideal of the ring of integers of $E$. Let
$S:=\{2, \sqrt{-3}\}$ and
\begin{eqnarray*}
T&=&\{ \sqrt{-3}-2,\ 
\sqrt{-3}+2,\  
1+2 \sqrt{-3},\ 
\sqrt{-3}-4,\
\sqrt{-3}+4,
  5+2\sqrt{-3},\ 5+4 \sqrt{-3}\}\\
  U&=&\{11, \sqrt{-3}-2, 1+2\sqrt{-3},\ 
     \sqrt{-3}-4 \}
\end{eqnarray*}
be three sets of primes in $\mathcal O_{K}$.
Suppose that $\rho_{1},\rho_{2}:\operatorname{Gal}(\bar
K/K)\lra\operatorname{GL}_{2}(E)$ are continuous Galois
representations unramified outside $S$ and
satisfying 
\begin{enumerate}
\item [1.] $\operatorname{Tr}(\rho_{1}(Frob_{\mathfrak p})) \equiv
  \operatorname{Tr}(\rho_{2}(Frob_{\mathfrak p})) \equiv 0
  \pmod{\mathcal P}$  for
  $\mathfrak p\in U$,
\item [2.]
  $\det(\rho_{1}) \equiv
  \det(\rho_{2})  \pmod{\mathcal P}$,
\item [3.] $\operatorname{Tr}(\rho_{1}(Frob_{\mathfrak p})) =
  \operatorname{Tr}(\rho_{2}(Frob_{\mathfrak p})) $
and
  $\det(\rho_{1}(Frob_{\mathfrak p})) =
  \det(\rho_{2}(Frob_{\mathfrak p}))  $ for
  $\mathfrak p\in T$.
\end{enumerate}
Then $\rho_{1}$ and $\rho_{2}$ have isomorphic semisimplifications.
\end{prop}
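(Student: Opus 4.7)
The plan is to follow the template established in Propositions \ref{prop1.1} and \ref{prop2}: first upgrade the congruence of traces on $U$ to a global vanishing mod $\mathcal P$, then verify the non-cubic hypothesis of \cite[Thm.~4.3]{L} via a character table over $T$, and finally invoke that theorem.

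For the first step, I would argue by contradiction. If $\operatorname{Tr}(\bar\rho_i) \not\equiv 0 \pmod{\mathcal P}$, then since $\operatorname{im}(\bar\rho_i) \subseteq \operatorname{GL}_2(\FF_2) \cong S_3$ and the elements of odd trace are precisely those of order $3$, the fixed field $L$ of $\ker(\bar\rho_i)$ satisfies $\operatorname{Gal}(L/K) \in \{C_3, S_3\}$ and contains a cubic extension $M/K$. Then $M/\QQ$ is a degree $6$ extension unramified outside $\{2,3\}$. Using the database \cite{JR} I would enumerate all such sextic fields, keep only those containing the subfield $\QQ[\sqrt{-3}]$, and for each survivor produce a prime $\mathfrak p \in U$ such that the defining cubic over $\mathcal O_K$ remains irreducible modulo $\mathfrak p$; this forces $\operatorname{Frob}_{\mathfrak p}$ to have order $3$ in $\operatorname{Gal}(L/K)$ and hence $\operatorname{Tr}(\bar\rho_i(\operatorname{Frob}_{\mathfrak p})) \equiv 1 \pmod{\mathcal P}$, contradicting assumption 1. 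The four primes in $U$ should be exactly what is needed to dispatch all candidates.

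For the Livn\'e step, I first describe the compositum $K_S$ of all quadratic extensions of $K = \QQ[\sqrt{-3}]$ unramified outside $S = \{2, \sqrt{-3}\}$. The ring $\mathcal O_K = \ZZ[\zeta_3]$ is a PID with cyclic unit group of order $6$; modulo squares only the class of $-1$ survives. The prime $2$ is inert in $K$ (since $-3 \equiv 5 \pmod 8$), and $\sqrt{-3}$ generates the unique prime above $3$. Hence $K_S/K$ is elementary abelian of rank $3$ with generators
\[
\sqrt{-1},\quad \sqrt{2},\quad \sqrt{\sqrt{-3}},
\]
giving $\operatorname{Gal}(K_S/K) \cong (\ZZ/2)^3$. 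I would then tabulate the triple of quadratic characters for each of the seven primes of $T$ and check that the resulting seven vectors exhaust $(\ZZ/2)^3 \setminus \{0\}$. Such an image is in particular non-cubic in the sense of \cite[Def.~4.1]{L}, and \cite[Thm.~4.3]{L} applies verbatim to give the desired isomorphism of semisimplifications.

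The main obstacle, exactly as in Proposition \ref{prop1.1}, is the field-theoretic bookkeeping at the beginning: enumerating the sextic fields in \cite{JR} that contain $\QQ[\sqrt{-3}]$ and matching each one to a prime in $U$ whose Frobenius has order $3$. Once the candidate list is handled, the remainder is a routine character computation at the seven primes of $T$ and a direct appeal to Livn\'e's theorem.
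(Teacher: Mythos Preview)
Your proposal is correct and follows essentially the same route as the paper: reduce to even traces by ruling out all cubic extensions of $K$ unramified outside $S$ via the sextic fields from \cite{JR} containing $\QQ[\sqrt{-3}]$ (the paper finds 16 of them and checks each against $U$), then verify that the Frobenius images at $T$ exhaust $(\ZZ/2)^3\setminus\{0\}$ and invoke \cite[Thm.~4.3]{L}. Your choice of generators $\sqrt{-1},\sqrt{2},\sqrt{\sqrt{-3}}$ for $K_S/K$ is equivalent to the paper's $\sqrt{2},\sqrt[4]{-3},\sqrt{\tfrac12(\sqrt{-3}+1)}$, since $\tfrac12(\sqrt{-3}+1)=\zeta_6$ and $-1$ lie in the same square class of $\mathcal O_K^*$.
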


\begin{proof}
We claim that condition 1. implies $\operatorname{Tr}(\rho_{1}) \equiv
\operatorname{Tr}(\rho_{2}) \equiv 0 \pmod{\mathcal P}$. To prove this we have
to determine all cubic extensions of $\QQ[\sqrt{-3}]$, which are unramified
outside $S$; they give degree 6 extensions of $\QQ$
unramified outside $\{2,3\}$ as in the proof of Proposition \ref{prop1.1}. 
Out of the 398 
extensions 16 contain $\QQ[\sqrt{-3}]$;
they are splitting fields of the following degree six
polynomials
\begin{eqnarray*}
&&{x}^{6}-{x}^{3}+1,\quad
{x}^{6}-3\,{x}^{5}+5\,{x}^{3}-3\,x+1, \quad
{x}^{6}+3,\quad
{x}^{6}-3\,{x}^{5}+3\,{x}^{3}+6\,{x}^{2}-9\,x+3\\
&&{x}^{6}-3\,{x}^{4}-2\,{x}^{3}+9\,{x}^{2}+12\,x+4,\quad
{x}^{6}-2\,{x}^{3}+4,\quad
{x}^{6}-3\,{x}^{3}+3, \quad
   {x}^{6}+12,\quad
   {x}^{6}+48\\
&&{x}^{6}-3\,{x}^{4}+9\,{x}^{2}-18\,x+12, \quad
{x}^{6}-6\,{x}^{3}+12, \quad
{x}^{6}-6\,{x}^{3}+36,\quad
{x}^{6}+18\,{x}^{4}+81\,{x}^{2}+12\\
&&{x}^{6}+3\,{x}^{4}-2\,{x}^{3}+9\,{x}^{2}-12\,x+4,\quad
{x}^{6}+27\,{x}^{2}-36\,x+12,\quad
{x}^{6}-6\,{x}^{4}-4\,{x}^{3}+9\,{x}^{2}+12\,x+52  
\end{eqnarray*}
each of which factors into a product of degree 3 polynomials over
$\QQ[\sqrt{-3}]$.
One readily verifies that each degree 3 polynomial has 
irreducible reduction modulo at least one prime from $U$.
The evenness of the traces follows.

The compositum $K_{S}$ of all quadratic extensions of $\QQ[\sqrt{-3}]$ unramified
outside $\{2,\sqrt{-3}\}$ equals the compositum of $\QQ[\sqrt{-3},\sqrt2]$,
$\QQ[\sqrt[4]{-3}]$, $\sqrt{\tfrac12(\sqrt{-3}+1)}$.
From the table of quadratic characters below, we read off that the
elements from the Galois group $\operatorname{Gal}(K_{S}/\QQ[\sqrt{-3}])$
corresponding to Frobenius elements at the primes from $T$ form a non-cubic set.
Now the
proposition follows from \cite[Thm. 4.3]{L}.
\[
  \begin{array}{r|r|c|c|c||r|r|c|c|c}
    \mathfrak p&N(\mathfrak p)&\ \ \sqrt{-3}\ \ &\ 2\ &\tfrac12(\sqrt{-3}+1)& 
\mathfrak p&N(\mathfrak p)&\ \ \sqrt{-3}\ \ &\ 2\
    &\tfrac12(\sqrt{-3}+1)\\\hline
\sqrt{-3}-2&7&0&0&1&\sqrt{-3}+4&19&1&1&1\\
\sqrt{-3}+2&7&1&0&1&5+2\sqrt{-3}&37&0&1&0\\
1+2\sqrt{-3}&13&1&1&0&5+4\sqrt{-3}&73&1&0&0\\
\sqrt{-3}-4&19&0&1&1\\
  \end{array}
\]
\end{proof}

\section{Double octics}
\label{s:octics}

In this paper we shall study the modularity of three Calabi-Yau threefolds
constructed as crepant resolution of a double cover of the
projective space $\PP^{3}$ branched along an arrangement of eight planes
$S=P_{1}\cup\dots\cup P_{8}$. 

If the planes satisfy the following
two conditions:
\begin{eqnarray}
\label{eq:cond}
\text{no six planes intersect,
\;no four planes contain a common line,}
\end{eqnarray}
then the double cover admits a projective crepant resolution of
singularities.
One calls the resulting Calabi-Yau threefold  a \emph{double
  octic}. 
  It is sometimes useful to note that
  the crepant  resolution can be arranged in such a way that 
  it exhibits 
the double cover as a double cover of a blow-up of
the projective space.

One of the key features of double octics
is that one can control their invariants,
in particular their Hodge numbers.
In particular, this is instrumental for constructing rigid double octics
or one-dimensional families (accounting for all the infinitesimal deformations of the smooth members).
For brevity, we omit the details here
and refer the
reader to the section~4.2 of C.~Meyer's monograph \cite{Mey}.

\section{Double octic with real multiplication by $\QQ[\sqrt2]$}

Let $X$ be the double octic Calabi-Yau threefolds constructed as a
resolution of the double covering of $\PP^{3}$ 
branched along the following 8 hyperplanes:
\[u^{2}=x(x-z)(x-v)(x-z-v)y(y-z)(y-v)(y+v+2z).\]
By separating the variable $x, y$ on the right-hand side,
one realizes that the double octic $X$ admits a fibration by Kummer surfaces 
(the fibration
is induced by the map $(x,y,z,v,u)\mapsto(z,v)$).
Following \cite{Schoen}, this Kummer structure arises from
the fiber product of rational elliptic surfaces with
singular fibers $I_{4},I_{4},I_{2},I_{2}$ and
$I_{2},I_{2},I_{2},I_{2},I_{2},I_{2}$ where the singular fibers are located as follows:
\[\def\arraystretch{1.3}
  \begin{array}{cccccc}
    \infty&0&1&-1&-\frac 12&-\frac 13\\\hline
    I_{4}&I_{4}&I_{2}&I_{2}&&\\
    I_{2}&I_{2}&I_{2}&I_{2}&I_{2}&I_{2}
  \end{array}
\]
The Calabi-Yau threefold $X$ is isomorphic to the element corresponding to
$t=-1/2$ of the one parameter family defined by the Arrangement
No. 250 (\cite{Mey}). In particular
\begin{eqnarray}
\label{eq:hij}
h^{11}(X)=37, \;\;\; h^{12}(X)=1,
\end{eqnarray}
and
the only primes of bad reduction of $X$ are $2$ and $3$.
The  Riemann-symbol of the Picard-Fuchs operator of the family of
Calabi--Yau threefolds defined by the Arrangement No.~250 is 
\[
\left\{ 
\begin {array}{cccccc}
-2 &-1&-1/2&0&1&\infty\\
\hline
0&  0&0&  0&0&1/2\\
1&1/2&1&1/2&1&1/2\\
1&1/2&3&1/2&1&3/2\\
2&1  &4& 1 &2&3/2\\
\end {array} \right\} 
\]
(for details see \cite{CvS}). The Picard-Fuchs operator is symmetric
with respect to the involution $t\mapsto -1-t$ and its fixed point $t=-\frac
12$ is an apparent singularity. The family, 
however, does not seem to be symmetric in an obvious way.
Our findings will depend in an essential way
on
a correspondence between  members of
the family exchanged by the involution (\cite{CvS}).
Applied to the given Calabi--Yau threefold $X$,
the correspondence induces a two-to-one rational map 
$$
\Psi:X\lra X
$$  
defined over $\QQ[\sqrt2]$ by
\[\Psi:\
  \begin{pmatrix}
    x\\y\\z\\v\\u
  \end{pmatrix}
  \mapsto
  \begin{pmatrix}
 x\left( x-v-z \right)  \left( z-v \right)  \left( 3\,y+v \right) \\
\tfrac12\, \left( 3\,z+v \right)  \left( {v}^{2}-2\,xv+zv+2\,{x}^{2}-2\,xz \right)  \left( y-v \right) \\
\tfrac12\, \left( {v}^{2}-2\,xv+zv+2\,{x}^{2}-2\,xz \right) \left( 3\,y+v \right) \left( z+v \right)\\
\tfrac12\, \left( {v}^{2}-2\,xv+zv+2\,{x}^{2}-2\,xz \right) \left( 3\,y+v \right) \left( z-v\right)\\
\tfrac{\sqrt2}2\, \left( v-z \right)  \left( v+3\,y \right) ^{2}{v}^{2}
\left( 2\,x-v-z \right) 
\left( v+z \right)  \left( 3\,z+v \right)  \left( {v}^{2}-2\,xv+zv+2\,{x}^{2}
-2\,xz \right) ^{2}u
  \end{pmatrix}
\]
The pullback by $\Psi$ of a canonical form
$\omega_{X}$ is
$\Psi^{*}\omega_{X}=\sqrt2\omega_{X}$. In particular the map
$\Psi^{*}$ acts as multiplication by
$\sqrt{2}$ on $H^{3,0}(X)\oplus H^{0,3}(X)$. 
On the other hand, the map $\Psi^*$ acts as the multiplication by $(-1)$ on the infinitesimal
deformation space $H^{1}(\mathcal T_{X})$ and hence as multiplication by
$(-\sqrt{2})$ on $H^{1,2}(X)\oplus H^{2,1}(X)$ 
(by Serre duality there is an isomorphism
between vector spaces $H^{1,2}(X)$ and $(H^{1}(\mathcal
T_{X}) \otimes H^{3,0})^{*}$ compatible with the action induced by $\Psi$).
Consequently the map $\Psi$ decomposes the motive $H^{3}(X)$ into a
direct sum of two two-dimensional submotives 
\begin{eqnarray}
\label{eq:split}
H^3(X) = H^{3}_{+}\oplus H^{3}_{-}
\end{eqnarray}
 defined as  $(\pm\sqrt 2)$--eigenspaces of
$\Psi^{*}$. 
The restriction of the Galois 
action to the sub-group $\operatorname{Gal}(\bar\QQ/\QQ[\sqrt2])$
preserves the two submotives and hence decomposes $H^3(X)$
as the direct sum of
two Galois-conjugate Galois representations
\[\rho,\bar\rho:\operatorname{Gal}(\bar \QQ/\QQ[\sqrt2])\lra
  \operatorname{GL}_{2}(\QQ_{2}[\sqrt2]).\] 
We shall study these Galois representations using the Lefschetz fixed point formula
(in oder to eventually prove their modularity).
To this end, we have to inspect the crepant resolution of the double octic more closely.

The Calabi-Yau threefold $X$ is a double covering of a blow-up
$X'$ of the projective space $\PP^{3}$, consequently there is an
involution $i:X\lra X$ acting on $X$. This involution induces a
decomposition 
$$\mbox{Pic}(X)=H^{2}(X,\ZZ)=H^{2}_{sym}(X,\ZZ)\oplus
H^{2}_{skew}(X,\ZZ)$$ of the Picard group of $X$ into
symmetric and skew-symmetric part. The symmetric part
$H^{2}_{sym}(X,\ZZ)$ is isomorphic to the cohomology group
$H^{2}(X',\ZZ)$. 
The  octic arrangement defining the Calabi-Yau threefold $X$ has 28 double lines and 8 fourfold
points, consequently in the process of resolution of singularities of
$X$ we blow-up the doubly covered projective space $36$ times and the rank of the
cohomology group
$H^{2}(X',\ZZ)$ equals 37. 
It follows from \eqref{eq:hij}
that the cohomology group
$H^{2}(X,\ZZ)$ is generated by classes of symmetric divisors defined over
$\QQ$.
By the comparison theorem for any prime $p\ge5$ the Frobenius morphism
$\operatorname{Frob}_{p}$,   acts on the \'etale cohomology
$H_{\rm et}^{2}(\bar {X}_{p},\QQ_{l})$ by multiplication
with $p$, where ${X}_{p}$ is the reduction of $X$ modulo $p$ and $\bar X_p=X\otimes\bar\FF_p$,
and likewise for all powers $\Frob_q$.

In order to compute  the trace of the Frobenius morphism
$\operatorname{Frob}_{q}$ using the
Lefschetz fixed point formula, first we count the points $N_q$ on the
singular double octic over $\FF_{q}$ using a computer program, then we add the
correction terms for the crepant resolution of singularities. Over $\QQ$, the exceptional locus
of the blow-up of a fourfold point not contained in any triple line
(type $p_{4}^{0}$ in the notation of \cite{Mey}) is isomorphic to a surface 
\begin{eqnarray}
\label{eq:E}
E=\{u^{2}=\alpha xyz(x+y+z)\subset \PP^{3}(1,1,1,2)\}, \text{ where
  }\alpha\in \QQ.
  \end{eqnarray}
  The number of points on $E$ over $\FF_q$ depends on whether $\alpha$ is a square (for details see \cite[p.~56]{Mey});
  in particular, we have
$E(\FF_q) = q^{2}+2q+1$ for any even $p$-power $q$. 
Any
other blow-up adds $q^{2}+q$ points to $X$. By the Lefschetz fixed point
formula,  the trace of
$\operatorname{Frob}_{p^{2}}$ on $H_{\rm et}^{3}(\bar{X}_{p},\QQ_{l})$ 
thus equals
\[
a_{p^{2}}:=\operatorname{Tr}(\operatorname{Frob}_{p^{2}}|H^{3}(\bar{X}_{p}))=-N_{p^{2}} +
  p^{6}+p^{4}+9p^{2}+1
\]
By \eqref{eq:split},
the Galois representation on $H^{3}(X)$ equals its tensor product with the
Dirichlet character associated to the Legendre symbol $(\tfrac 2p)$.
Hence,
if $p$ is an \emph{inert} prime in $\QQ[\sqrt2]$,  the trace of
$\operatorname{Frob}_{p}$ on $H^{3}(X)$ vanishes:
\[
a_{p}:=\operatorname{Tr}(\operatorname{Frob}_{p}|H^{3}(X))=0;
\]
consequently the Frobenius polynomial equals
\[
X^{4}-\frac{a_{p^{2}}}2X^{2}+p^{6}.
\]
If $F_{p}\in Gal(\bar \QQ/\QQ[\sqrt2])$ is a Frobenius element, 
then the trace of value of $\rho$ and $\bar\rho$ at $F_{p}$ equals
$$\operatorname{Tr}(\rho(F_{p}))=\operatorname{Tr}(\bar\rho(F_{p}))=\frac12a_{p^{2}}.
$$ 

If $p$ is a \emph{split} prime,
 then the trace of $\operatorname{Frob}_{p^{2}}$
can be computed as before by a point count in $\FF_{p^{2}}$. In order
to compute the trace of $\operatorname{Frob}_{p}$ with a point-count
we have to take into account the contribution from the eight fourfold
points of the arrangement. Using \eqref{eq:E} we get
in this situation
\[a_{p}:=\operatorname{Tr}(\operatorname{Frob}_{p}|H^{3}(X))=-N_{p} +
  p^{3}+p^{3}+p(2+3(\tfrac{-1}p)+4(\tfrac{-2}p))+1.
\]
The Frobenius polynomial equals 
\[
\chi(\operatorname{Frob}_{p})=X^{4}-a_{p}X^{3} -
\tfrac12(a_{p}^{2}+a_{p^{2}})X^{2} -a_{p}p^{3} + p^{6}.
\]
In the following table we collect Frobenius polynomials for the values of $p$
that we will need to prove modularity.

\def\arraystretch{1.4}
\begin{longtable}[t]{|r|r|r|l|}
\hline $p$&$a_{p}$&$a_{p^{2}}$&$F_{p}$\\\hline
\hline 5 & 0 & 20 &\ppp {X}^{4}-10\,{X}^{2}+15625
 || \\
\hline 7 & 32 & -796 &\ppp {X}^{4}-32\,{X}^{3}+910\,{X}^{2}-10976\,X+117649\\
({X}^{2}+4\,\sqrt {2}X-16\,X+343)\times ({X}^{2}-4\,\sqrt {2}X-16\,X+343)|| \\
\hline 11 & 0 & -1452 &\ppp {X}^{4}+726\,{X}^{2}+1771561\\
({X}^{2}-44\,X+1331)\times ({X}^{2}+44\,X+1331)|| \\
\hline 17 & -124 & -10940 &\ppp {X}^{4}+124\,{X}^{3}+13158\,{X}^{2}+609212\,X+24137569\\
({X}^{2}+16\,\sqrt {2}X+62\,X+4913)\times ({X}^{2}-16\,\sqrt {2}X+62\,X+4913)|| \\
\hline 23 & 80 & -45212 &\ppp {X}^{4}-80\,{X}^{3}+25806\,{X}^{2}-973360\,X+148035889\\
({X}^{2}+8\,\sqrt {2}X-40\,X+12167)\times ({X}^{2}-8\,\sqrt {2}X-40\,X+12167)|| \\
\hline 31 & 272 & -59068 &\ppp {X}^{4}-272\,{X}^{3}+66526\,{X}^{2}-8103152\,X+887503681\\
({X}^{2}-76\,\sqrt {2}X-136\,X+29791)\times ({X}^{2}+76\,\sqrt {2}X-136\,X+29791)|| \\
\hline 41 & 84 & -148252 &\ppp {X}^{4}-84\,{X}^{3}+77654\,{X}^{2}-5789364\,X+4750104241\\
({X}^{2}-176\,\sqrt {2}X-42\,X+68921)\times ({X}^{2}+176\,\sqrt {2}X-42\,X+68921)|| \\
\hline 47 & -64 & -134460 &\ppp {X}^{4}+64\,{X}^{3}+69278\,{X}^{2}+6644672\,X+10779215329\\
({X}^{2}+264\,\sqrt {2}X+32\,X+103823)\times ({X}^{2}-264\,\sqrt {2}X+32\,X+103823)|| \\
\hline 89 & -2476 & 507556 &\ppp {X}^{4}+2476\,{X}^{3}+2811510\,{X}^{2}+1745503244\,X+496981290961\\
({X}^{2}+256\,\sqrt {2}X+1238\,X+704969)\times ({X}^{2}-256\,\sqrt {2}X+1238\,X+704969)|| \\
\hline 97 & 1284 & -2822268 &\ppp {X}^{4}-1284\,{X}^{3}+2235462\,{X}^{2}-1171872132\,X+832972004929\\
({X}^{2}+32\,\sqrt {2}X-642\,X+912673)\times ({X}^{2}-32\,\sqrt {2}X-642\,X+912673)|| \\
\hline
\end{longtable}
To avoid working with
four-dimensional Galois representations (as in \cite{DPS}),
we have determine the precise traces of $\operatorname{Frob}_{\mathfrak
  p}$ on $H^{3}_{+}$ and $H^{3}_{-}$ for a prime of $\OO_K$ above $p$.
To this end, we shall 
exploit the endomorphism $\Psi$;
more precisely, we
study the action of $\operatorname{Frob}_{\mathfrak
  p}\circ\Psi$ on $H^3(X)$.
This map preserves the Kummer fibration and transforms the fiber at
$(z,v)$ into the fiber at $(z+v,z-v)$. This allows us to determine
the number of fixed points of $\Psi$;
indeed, we can restrict ourselves to
the fibers at  $(1\pm\sqrt2,1)$. At those points, the fiber is
isomorphic to the
Kummer surface of the product of the elliptic curves
$$
u^{2}=x^{3}-30x+56\;\;\; \text{ and } \;\;\; u^{2}=y^{3}-y,
$$ 
and the map $\Psi$
is induced by the complex multiplications given
\begin{eqnarray*}
x\longmapsto -\frac{x^{2}-4x+18}{2(x-4)}  
\;\;\; \text{ and } \;\;\; 
y\longmapsto -\frac{y+1}{y-1}.
\end{eqnarray*}

As the map $\Psi$ acts on $H^{0,3}\oplus H^{3,0}$ as multiplication by
$\sqrt2$ and on $H^{2,1}\oplus H^{1,2}$ as multiplication by
$-\sqrt2$, we infer that the trace of the induced map on the third
cohomology is zero: $\tr(\Psi^{*}|H^{3})=0$.
Using Magma we computed that the map $\Psi$ has Lefschetz number
equal 12, so we get
\[\tr(\Psi^{*}|H^{0})=1, \;\; \tr(\Psi^{*}|H^{2})+\tr(\Psi^{*}|H^{4})=9, \;\; \tr(\Psi^{*}|H^{6})=2.\]
In a similar way we computed the trace of the composition
$\operatorname{Frob}_{\mathfrak p}\circ\Psi$ for split primes
$p=7,17,23, 31, 47, 89$. 
As the Picard group of $X$ is defined by divisors defined
over $\QQ$, the Frobenius morphism $\Frob_\p^*$ acts on $H^{2k}$ as  multiplication by
$p^{k}$, $k=0,1,2,3$. By direct computations with Magma,
 we found  the
Lefschetz numbers listed in the table below.
\[
\begin{array}{r||r|r|r|r|r|r}
  \mathfrak p&3+\sqrt2&3-\sqrt2
  &3\sqrt2-1
  &5+\sqrt2&5-\sqrt2&4\sqrt2+1\\\hline
  N(\mathfrak p)&7&7&17&23&23&31\\\hline
  \mathcal L(\operatorname{Frob}_{\mathfrak p}\circ\Psi)&944&976
  &11404&27104&27040&64208\\\hline
  \tr(\operatorname{Frob}_{\mathfrak p}^{*}|H^{3}_{+}) & 
  16+4\sqrt2&16-4\sqrt2
  &-62-16\sqrt2&40-8\sqrt2&40+8\sqrt2&
  136+76\sqrt2
  \\\hline\hline
\mathfrak p&4\sqrt2-1&5\sqrt2-3&\sqrt2+7 &\sqrt2-7
  &4\sqrt2-11&1-7\sqrt2\\\hline
  N(\mathfrak p)&31&41&47&47&89&97\\\hline
  \mathcal L(\operatorname{Frob}_{\mathfrak p}\circ\Psi)
             &64816&147116&219936&217824&1450924&1872652\\\hline
  \tr(\operatorname{Frob}_{\mathfrak p}^{*}|H^{3}_{+})
  &136-76\sqrt2&42-176\sqrt2&-32-264\sqrt2&-32+264\sqrt2&-1238-256\sqrt2&
  642+32\sqrt2
\end{array}
\]
The table also lists the traces of $\operatorname{Frob}_{\mathfrak
  p}\circ\Psi$ on $H^3(X)$.
  These can be determined as follows.
Since we do not know which factor of the Frobenius polynomial $F_{p}$
 corresponds to the characteristic polynomial of $\Frob_\p$ on
$H^{3}_{+}$ and which to $H^{3}_{-}$, we can determine the trace of
$\operatorname{Frob}_{\mathfrak p}\circ\Psi$ a priori only up to a sign. 
From the table on page 7 we get the following values of traces of
$\operatorname{Frob}^{*}_{\mathfrak p}$ on $H^{3}_{+}/H^{3}_{-}$
\[
  \begin{array}{c|c|c|c|c|c|c|c}
    7&17&23&31&41&47&89&97\\\hline
    16\pm4\sqrt2&-62\pm16\sqrt2&40\pm8\sqrt2&136\pm76\sqrt2
              &42\pm176\sqrt2& -32\pm264\sqrt2& -1238\pm256\sqrt2& 642\pm32\sqrt2
  \end{array}
\]

We have for any split prime $p\in\ZZ$ and any prime $\mathfrak
p\in\ZZ[\sqrt2]$ over $p$

\[\mathcal L(\operatorname{Frob}^{*}_{\mathfrak p}\circ\Psi)
  =
  1+p\operatorname{tr}(\Psi^{*}|H^{2})-\sqrt2(\operatorname{tr}(\operatorname{Frob}^{*}_{\mathfrak
    p}|H^{3}_{+}) - \operatorname{tr}(\operatorname{Frob}^{*}_{\mathfrak
    p}|H^{3}_{-}))+p^{2}\operatorname{tr}(\Psi^{*}|H^{4})+2p^{3}.
\]
and
\[\operatorname{tr}(\Psi^{*}|H^{2})+\operatorname{tr}(\Psi^{*}|H^{4})=9.\]
In the case $p=7,\;\;\mathfrak p=3+\sqrt2$ we get two possibilities
\[976=1+7\operatorname{tr}(\Psi^{*}|H^{2}) -
  \sqrt2((16-4\sqrt2)-(16+4\sqrt2)) + 49\operatorname{tr}(\Psi^{*}|H^{4}) +686
\]
or 
\[976=1+7\operatorname{tr}(\Psi^{*}|H^{2}) -
  \sqrt2((16+4\sqrt2)-(16-4\sqrt2)) + 49\operatorname{tr}(\Psi^{*}|H^{4}) +686,
\]
or equivalently,
\[273=7(\operatorname{tr}(\Psi^{*}|H^{2})+7\operatorname{tr}(\Psi^{*}|H^{4}))
  \qquad\text{ and }\qquad
305=7(\operatorname{tr}(\Psi^{*}|H^{2})+7\operatorname{tr}(\Psi^{*}|H^{4}))  
\]
As $7\nmid305$, the second option is impossible and consequently
\[\operatorname{tr}(\Psi^{*}|H^{2})+7\operatorname{tr}(\Psi^{*}|H^{4})=39.\]
Together with 
\[\operatorname{tr}(\Psi^{*}|H^{2})+\operatorname{tr}(\Psi^{*}|H^{4})=9,\]
this
yields
\[\operatorname{tr}(\Psi^{*}|H^{2})=4, \qquad \operatorname{tr}(\Psi^{*}|H^{4})=5.\]

Now, we get
\[\sqrt2(\operatorname{tr}(\operatorname{Frob}^{*}_{\mathfrak
    p}|H^{3}_{+}) - \operatorname{tr}(\operatorname{Frob}^{*}_{\mathfrak
    p}|H^{3}_{-})) = -\mathcal L(\operatorname{Frob}^{*}_{\mathfrak
    p}\circ\Psi)+1+4p+5p^{2}+2p^{3},
  \]
  and we can compute the entries of the above table.


Using Magma one finds 3 Hilbert modular forms for $K=\QQ[\sqrt2]$ of
weight $[4,2]$ and level $6\sqrt2\mathcal O_K$.
For one of them, let us denote it by $h_{1}$,
the Hecke eigenvalues agree exactly with
the traces of the action of Frobenius  on $H_{+}^{3}$ computed in
the above table.
(For the reader's convenience, we provide a table of eigenvalues at \cite{www}.)

\begin{thm}
\label{thm1}
  The Galois representation of $\operatorname{Gal}(\bar \QQ/\QQ[\sqrt2])$
  on the motive $H^{3}_{+}$ is isomorphic to the Galois representation
  of the Hilbert modular form $h_{1}$ for $K=\QQ[\sqrt2]$ of weight $[4,2]$ and
  level $6\sqrt2\mathcal O_K$. 
\end{thm}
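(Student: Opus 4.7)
The plan is to apply Proposition \ref{prop1.1} with $\rho_1$ the Galois representation of $\operatorname{Gal}(\bar\QQ/K)$ on the $\sqrt 2$-eigenspace $H^{3}_{+}$ and $\rho_2$ the Galois representation attached to the Hilbert newform $h_1$. Both are continuous two-dimensional representations with coefficients in $E=\QQ_2[\sqrt 2]$: for $\rho_1$ this comes from the decomposition \eqref{eq:split} into $(\pm\sqrt 2)$-eigenspaces of $\Psi^{*}$ (noting that $2$ is ramified in $\QQ[\sqrt 2]$, so $E$ is the unique $2$-adic completion of the coefficient field), and for $\rho_2$ this is the standard existence theorem for Galois representations attached to Hilbert newforms, combined with the fact that $h_1$ has Hecke eigenvalue field $\QQ[\sqrt 2]$.

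To apply the proposition I would verify its four inputs in turn. Unramifiedness outside $S=\{\sqrt 2, 3\}$ holds for $\rho_1$ because $X$ has bad reduction only at $2$ and $3$ and the endomorphism $\Psi$ is $K$-rational, so $H^{3}_{+}$ inherits this; for $\rho_2$ it follows from the level $6\sqrt 2\,\mathcal O_K$ being supported on $S$. Condition 2 (agreement of determinants modulo $\mathcal P$) holds because both determinants are equal to the third power of the cyclotomic character up to a finite-order twist ramified only at primes of $S$, and any such twist reduces to the trivial character modulo $\mathcal P=\sqrt 2\,\ZZ_2$. Condition 1 (vanishing of traces modulo $\mathcal P$ at the primes of $U$) can be read off for $\rho_1$ from the table of $\operatorname{tr}(\operatorname{Frob}_{\mathfrak p}^{*}|H^{3}_{+})$ above, whose entries all lie in $2\,\ZZ[\sqrt 2]$, and is checked on the automorphic side from the MAGMA-computed Hecke eigenvalues of $h_1$. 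Condition 3 is a finite prime-by-prime comparison of the two tables at the primes in $T$.

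The main obstacle is this last step: ensuring that the Hecke eigenvalues of $h_1$ produced by MAGMA match, entry by entry, the geometric Frobenius traces on $H^{3}_{+}$ at every $\mathfrak p\in T$. This is precisely why the sign ambiguity between the two factors of the Frobenius polynomial $F_p$ had to be resolved using the Lefschetz number of $\operatorname{Frob}_{\mathfrak p}\circ\Psi$ before selecting $h_1$ among the three candidate Hilbert newforms of weight $[4,2]$ and level $6\sqrt 2\,\mathcal O_K$. Once the finite comparison succeeds, Proposition \ref{prop1.1} yields an isomorphism of semisimplifications; since $\rho_2$ is irreducible ($h_1$ being a cuspidal newform) and $\rho_1$ is semisimple (being cut out by a motivic projector from the \'etale cohomology of a smooth projective variety), the two representations are in fact isomorphic, which is the content of Theorem \ref{thm1}.
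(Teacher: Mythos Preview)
Your proposal is correct and follows essentially the same route as the paper: set up the two $2$-adic representations, check unramifiedness outside $S$, and verify conditions 1--3 of Proposition~\ref{prop1.1} using the Frobenius-trace tables on the geometric side and the MAGMA Hecke data on the automorphic side. Two small remarks: the paper handles condition~2 more concretely than you do, simply observing that $\det(\rho_1(\Frob_{\mathfrak p}))\mid p^6$ is odd and that $\det(\rho_2(\Frob_{\mathfrak p}))=N(\mathfrak p)^3$ by triviality of the nebentypus, whereas your argument via ``cyclotomic cube up to a finite twist'' is cleaner but requires knowing the global shape of $\det\rho_1$; and your final paragraph upgrading the isomorphism of semisimplifications to an actual isomorphism (via irreducibility of $\rho_2$ and semisimplicity of $\rho_1$) is a genuine addition, since the paper's proof stops at the conclusion of Proposition~\ref{prop1.1}.
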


\begin{proof}
  There exist continuous Galois representations
  \[
  \rho_{1},\rho_{2}: \;\;
  \operatorname{Gal}(\bar \QQ/\QQ[\sqrt{2}]) \lra
    \operatorname{GL}_{2}(\QQ_{2}[\sqrt{2}]) \]
  defined by the motive $H_{+}^{3}$ and the Hilbert modular form
  $h_{1}$. We shall verify that the representations $\rho_{1}$ and
  $\rho_{2}$ satisfy the assumptions of Proposition~\ref{prop1.1}.
  We have computed the traces of $\operatorname{Frob}_{\mathfrak
      p}|H^{3}_{+}$ for $\mathfrak p\in T$ and verified with MAGMA
    that they agree with Hecke eigenvalues of $h_{1}$;
    for $\mathfrak
    p \in U$ we check that both traces are even. Moreover for any
   $\p\in T$ we check that $\det(\rho_{1}(\operatorname{Frob}_{\mathfrak
     p}))=p^{3}$.
Since $\det(\rho_{1}(\operatorname{Frob}_{\mathfrak
      p}))|p^{6}$ for any $p\ge5$ and any prime $\mathfrak p$ in
    $\QQ[\sqrt2]$ over $p$, it follows that $\det(\rho_{1}(\operatorname{Frob}_{\mathfrak
      p}))$ is odd.

    Finally, $h_{1}$ is a Hilbert modular form with trivial character,
    so $\det(\rho_{2}(\operatorname{Frob}_{\mathfrak
      p}))=N(\mathfrak p)^{3}$ (which is odd).
      Thus the assumptions of Proposition~\ref{prop1.1} are satisfied,
            and  applying the proposition concludes the proof.
\end{proof}

\begin{remark}
\label{rem'}
It follows that 
 the Galois representation 
  on the motive $H^{3}_{-}$ is isomorphic to the Galois representation
  of the Hilbert modular form $\bar h_1$ for $\QQ[\sqrt2]$ of weight $[2,4]$ and
  level $6\sqrt2\mathcal O$. 
Observe the  divisibility condition $\bar a_\p\in\p$ for all Hecke eigenvalues of $\bar h_1$
in the given range of primes (in agreement with \cite[\S 3]{Hida}).
We believe that this could be proven geometrically using the Hodge type $(2,1)+(1,2)$ of $H^3_-$ 
along the lines of  \cite{Mazur}.
This could then also simplify the determination of the factor of the characteristic polynomial of Frobenius
corresponding to $H^3_+$ at ordinary primes.
\end{remark}

\section{Hilbert modular rigid Calabi-Yau threefold over
  $\QQ[\sqrt5]$}

Let $Y$ be the double octic defined as a crepant resolution of singularities
of the hypersurface
\[u^{2}=xyzv \left( x+y+z \right)  \left(  \varphi y-z+v
 \right)  \left( x+y+ \varphi v \right) \left( 
 (1-\varphi) x+y-\varphi z+ \varphi v \right)\subset \PP(1,1,1,1,4),\]
where  $\varphi=\tfrac12(-1+\sqrt{5}).$ Then $Y$ is a rigid Calabi-Yau
threefold with $h^{11}=38$ by \cite[Prop.~5.4]{CS-non-liftable}, and one verifies as before that the Picard group is generated
by divisors defined over $K=\QQ[\sqrt5]$ while the only prime of bad
reduction of $Y$ is 2. 

For prime numbers $p\equiv 1,4 \pmod5$ we  computed the  numbers
$n_{p}$ and $n_{p^{2}}$ of points of the singular double covering over
$\mathbb F_{p}$ and over $\mathbb F_{p^{2}}$. The resolution of
singularities is blowing-up 28 double lines and 9 fourfold points, and
by the Lefschetz fixed point formula we get the following traces on $H^3(\bar Y_\p)$
(where we suppress the cohomology group for ease of notation): 
\[\operatorname{Tr}(\operatorname{Frob}_{\mathfrak p^{2}}) =
  -n_{\p^{2}}+p^{6}+p^{4}-8p^{2}+1, \quad 
  \operatorname{Tr}(\operatorname{Frob}_{\mathfrak
    \p})=-n_{p}+p^{3}+p^{2}+cp+1, \;\;\;
    c\in\{-8,\dots,10\},\]
in a similar way as for the Calabi-Yau threefold $X$.
Moreover, comparing the actions of $\operatorname{Frob}_{\mathfrak p}$
and $\operatorname{Frob}_{\mathfrak p^{2}}$
\[
\operatorname{Tr}(\operatorname{Frob}_{\mathfrak p^{2}})
  = \operatorname{Tr}(\operatorname{Frob}_{\mathfrak p})^{2}-2p^{3}.
  \]
We  observe that  these equalities often suffice to determine 
$\operatorname{Tr}(\operatorname{Frob}_{\mathfrak p})$.
The next table  collects the results of the computations for the split primes
which we shall
need in the proof of modularity.
\[
\begin{array}{|r|r|r|r|r|r|r|}
\hline  p&\mathfrak p&\varphi&n_{\p}&n_{\p^{2}}&\operatorname{Tr}(\operatorname{Frob}_{\mathfrak p})&
\operatorname{Tr}(\operatorname{Frob}_{\mathfrak p^{2}})\\
\hline
  \hline 11&\sqrt5+4 &3& 1459& 1784297& 60& 938\\
  \cline{2-7} &\sqrt5-4&7& 1461& 1786601& 36& -1366\\
  \hline 29&2\sqrt5+7& 5& 25217& 595525129& -218& -1254\\
  \cline{2-7} &2\sqrt5-7&23& 25089& 595564553& -90& -40678\\
  \hline 31&\sqrt5+6& 12& 30685& 888442233& 192& -22718\\
  \cline{2-7}&\sqrt5-6 &18& 31003& 888475001& -64& -55486\\
  \hline 61&2\sqrt5-9& 17& 230471& 51534519081& 354& -328646\\
  \cline{2-7}&2\sqrt5+9 &43& 230215& 51534272297& 610& -81862\\
\hline
\end{array}
\]
To apply Proposition \ref{prop2}, we also require information at two inert primes.
For $p=3$ we were able to compute the number of points
of the singular double cover of
$\PP^{3}$
 over
$\FF_{3^{2}}$ and $\FF_{3^{4}}$,  obtaining $n_{9}=815$, $n_{81}=538617$. Similar computations
as in the case of split primes give
$\operatorname{Tr}(\operatorname{Frob}_{9})=-1262$ and 
$\operatorname{Tr}(\operatorname{Frob}_{3})=14$.
For $p=13$ we computed that the number of points over $\FF_{13^{2}}$
equals $4857691$.
To obtain the contribution 
 for the exceptional divisors \eqref{eq:E} over the fourfold points,
notice that the values of  $\alpha$ in question,
\[6-2\,\sqrt {5},\ 
\tfrac12(-3+3\,\sqrt {5}),\ 
\tfrac12(-3+3\,\sqrt {5}),\ 
4\,\sqrt {5}-8,\ 
-6\,\sqrt {5}+14,\ 
\tfrac12(3-1\,\sqrt {5}),\ 
6\,\sqrt {5}-14,\ 
\sqrt {5}-3,\ 
-1+\sqrt {5},\]
are all squares in $\FF_{13^2}$.
From the Lefschetz fixed point formula  we thus derive the trace of $\operatorname{Frob}_{13}$ 
on $H^3(\bar Y_{13})$ as 
\[
\operatorname{Tr}(\operatorname{Frob}_{13})=
-(4857961+9\cdot 13^{2})+1+13^{2}+13^{4}+13^{6}=-3942.\]

Using Magma we found 24 Hilbert modular forms for $\QQ[\sqrt5]$ of
parallel weight $[4,4]$ and level $16\mathcal O$;
one of them, denoted by $h_{2}$,
has exactly the same Hecke eigenvalues as the Frobenius traces above.  
Proposition \ref{prop2} thus guarantees the modularity of $Y$:

\begin{thm}
\label{thm2}
  The Galois representation of $\operatorname{Gal}(\bar \QQ/\QQ[\sqrt5])$ on
  $H^{3}_{et}(Y,\QQ_{l})$ is Hilbert modular with corresponding
  Hilbert modular form $h_2$.
  \end{thm}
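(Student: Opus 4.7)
The plan is to apply Proposition~\ref{prop2} to the pair of two-dimensional $2$-adic Galois representations
\[
\rho_1: \operatorname{Gal}(\bar\QQ/K)\lra \operatorname{GL}_2(\QQ_2)
\]
on $H^3_{et}(Y,\QQ_2)$ (which is two-dimensional since $Y$ is rigid, so that $b_3(Y)=2$) and $\rho_2$ the Galois representation attached to the Hilbert modular form $h_2$. Since the only prime of bad reduction of $Y$ is $2$, the representation $\rho_1$ is unramified outside $S=\{2\}$, and since $h_2$ has level $16\mathcal O_K=2^4\mathcal O_K$, the same holds for $\rho_2$; note that in both cases the coefficient field is genuinely $\QQ_2$ (the Hecke eigenvalues of $h_2$ being rational integers), which matches the setup of Proposition~\ref{prop2}.

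Next I would verify the three hypotheses of Proposition~\ref{prop2}. For hypothesis~2, observe that $Y$ being a rigid Calabi--Yau threefold forces $\det(\rho_1(\operatorname{Frob}_{\mathfrak p}))=N(\mathfrak p)^3$ for any prime $\mathfrak p$ of good reduction (the determinant of Frobenius on the rank-two $H^3$ is determined by the Hodge--Tate weights $0$ and $3$), while $h_2$ has parallel weight $[4,4]$ and trivial character, giving $\det(\rho_2(\operatorname{Frob}_{\mathfrak p}))=N(\mathfrak p)^3$ as well; these values are odd for every $\mathfrak p$ lying above a prime $p\neq 2$, so the two determinants coincide on all relevant Frobenius elements and in particular agree modulo $\mathcal P$. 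For hypothesis~1, I would invoke the computation above showing $\operatorname{Tr}(\operatorname{Frob}_3|H^3)=14\equiv 0\pmod 2$ for $\rho_1$, and check in MAGMA that the Hecke eigenvalue of $h_2$ at the inert prime $3$ is likewise even.

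For hypothesis~3, the bulk of the work has already been carried out: the explicit Frobenius traces on $H^3(\bar Y_{\mathfrak p})$ for $\mathfrak p\in T$ have been tabulated above via point counts over $\FF_{\mathfrak p}$ and $\FF_{\mathfrak p^2}$ combined with the contributions from the crepant resolution (using \eqref{eq:E} at the nine fourfold points and the standard $q^2+q$ contribution for the blown-up double lines). These values were stated to match the Hecke eigenvalues of $h_2$ exactly, and the determinant identity $N(\mathfrak p)^3$ on both sides makes the determinant condition automatic.

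With all three hypotheses of Proposition~\ref{prop2} verified, the proposition yields that $\rho_1$ and $\rho_2$ have isomorphic semisimplifications, which (since both are irreducible two-dimensional Galois representations) gives an honest isomorphism and therefore modularity of $Y$. The only step that could realistically cause trouble is the finiteness of the trace comparison: if MAGMA's eigenvalue for $h_2$ at any prime in $T$ did not match the geometric trace, one would need to enlarge $T$ and verify non-cubicity for a new set, but since the matching has already been observed, no obstruction remains.
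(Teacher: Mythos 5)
Your proposal is correct and follows essentially the same route as the paper: verify the hypotheses of Proposition~\ref{prop2} using the tabulated Frobenius traces at the primes of $T$ (the split primes above $11, 29, 31, 61$ together with the inert primes $3$ and $13$), the evenness of $\operatorname{Tr}(\operatorname{Frob}_{3})=14$, and the determinant identity $N(\mathfrak p)^{3}$ on both sides, then conclude by Livn\'e's criterion. The only superfluous step is your final upgrade from isomorphic semisimplifications to an ``honest isomorphism'' via irreducibility, which is neither needed for the stated modularity claim nor actually verified.
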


\section{Modular Calabi-Yau threefold over
  $\QQ[\sqrt{-3}]$}

Let $Z$ be the double octic defined as a resolution of singularities
of the hypersurface 
\begin{eqnarray}
\label{eq:Z}
u^{2}=xyzv \left( x+y \right)  \left( x+y+z-v \right)  \left(
    \zeta\,x-y+\zeta\,z \right)  \left(y -\zeta\,z-v \right)\subset
  \PP(1,1,1,1,4),
  \end{eqnarray}
where $\zeta=\tfrac12(-1+i\sqrt{3})$ and, of course, $i^2=-1$. Then $Z$ is a rigid Calabi-Yau
threefold with $h^{11}=46$ as can be checked by considering it as a member of 
the one-dimensional family of double octics given by arrangement No.\ 262 in \cite{Mey}.
As before, one verifies that the Picard group is generated
by divisors defined over $K=\QQ[\sqrt{-3}]$, and that the only prime of bad
reduction of $Z$ is 2. 
\begin{prop}
  $Z$ is birational to a Calabi-Yau threefold defined over $\QQ[i]$. 
\end{prop}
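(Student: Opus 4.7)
The plan is to construct an explicit birational map $Z \dashrightarrow Z'$ where $Z'$ is a double octic defined over $\QQ[i]$. Since $Z$ arises as a crepant resolution of the double cover of $\PP^3$ branched along the arrangement in \eqref{eq:Z}, and any projective linear change of coordinates on $\PP^3$ together with a rescaling of $u$ induces a birational map between the corresponding resolved double octics, it suffices to produce a $\varphi \in \operatorname{PGL}_4(\QQ(\zeta_{12}))$ (with $\QQ(\zeta_{12})=\QQ[\sqrt{-3},i]$) and a scalar $\lambda \in \QQ(\zeta_{12})^{\times}$ such that the substitution $u = \lambda u'$, $(x,y,z,v) = \varphi(x',y',z',v')$ transforms \eqref{eq:Z} into an equation $u'^{\,2} = g(x',y',z',v')$ with $g$ a product of eight distinct linear forms in $\QQ[i][x',y',z',v']$.

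To find $\varphi$ and $\lambda$, I would exploit the observation that six of the eight branch planes in \eqref{eq:Z}, namely $x=0$, $y=0$, $z=0$, $v=0$, $x+y=0$ and $x+y+z-v=0$, are already defined over $\QQ$; only the two planes $\zeta x - y + \zeta z = 0$ and $y - \zeta z - v = 0$ involve the primitive cube root of unity $\zeta$. The strategy is to parametrize those $\varphi$ that stabilize, up to rescaling of each defining equation by an element of $\QQ[i]^{\times}$, the six rational planes, and then use the residual freedom to absorb $\zeta$ from the remaining two. Writing down the general linear transformation compatible with the first requirement reduces the degrees of freedom enough that the second becomes a small nonlinear system over $\QQ(\zeta_{12})$ in the free parameters, which can be solved by hand; the product of the eight scalar factors used to normalize the new planes is then absorbed into $u$ via an appropriate $\lambda \in \QQ(\zeta_{12})^{\times}$ (which exists since $\QQ(\zeta_{12})$ contains all twelfth roots of unity).

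Once $\varphi$ and $\lambda$ are exhibited, one verifies that the transformed arrangement of eight planes still satisfies the genericity conditions \eqref{eq:cond} -- which is automatic, being projectively invariant -- and concludes by the construction reviewed in Section \ref{s:octics} that the crepant resolution $Z'$ of the new double octic is a Calabi--Yau threefold over $\QQ[i]$, birational to $Z$ via the composition of $\varphi$ with the covering involutions. The main obstacle is the explicit determination of $\varphi$: although the existence of such a transformation is morally guaranteed by a Hilbert~90 type argument applied to the quadratic extension $\QQ(\zeta_{12})/\QQ[i]$ and the linear action on the two offending planes, producing it amounts to a finite but nontrivial search whose success depends on choosing the right parametrization of the stabilizer of the six rational planes.
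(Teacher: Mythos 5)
There is a genuine gap here, on two levels. First, your argument is not actually a proof: you describe a search for a transformation $\varphi$ and a scalar $\lambda$, concede that ``producing it amounts to a finite but nontrivial search,'' and never exhibit them. Second, the search as you have set it up cannot succeed. The six rational planes $x,y,z,v,x+y,x+y+z-v$ are in general position enough that any element of $\operatorname{PGL}_4$ fixing each of them (up to scalar) is the identity: a diagonal matrix preserving $x+y$ must scale $x$ and $y$ equally, and preserving $x+y+z-v$ then forces all four weights to coincide. Allowing the six planes to be permuted only yields a finite group, and each of its elements is determined over $\QQ$ by the induced permutation (the defining conditions are $\QQ$-linear with a one-dimensional solution space). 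A $\QQ[i]$-rational $\varphi$ cannot send the form $\zeta x-y+\zeta z$, which is not proportional to any $\QQ[i]$-form, to a $\QQ[i]$-rational form, since pullback commutes with the Galois conjugation $\zeta\mapsto\bar\zeta$ of $\QQ(\zeta_{12})/\QQ[i]$. So there is no ``residual freedom'' to absorb $\zeta$, and the Hilbert~90 heuristic does not rescue this: what descent theory can give you is an abstract model over $\QQ[i]$, not a model that is again a double cover of $\PP^3_{\QQ[i]}$ branched along eight $\QQ[i]$-rational planes, and the latter, stronger statement is exactly what your strategy would need.

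The paper's proof goes the other way around. It writes down an explicit linear automorphism of $\PP(1,1,1,1,4)$ over $\QQ(\zeta_{12})$ which permutes the eight branch planes nontrivially and carries the equation \eqref{eq:Z} to its conjugate under $\operatorname{Gal}(\QQ(\zeta_{12})/\QQ[i])$; this is an isomorphism from $Z$ to its Galois conjugate, and Weil's Galois descent theorem \cite{Weil} then produces a model over $\QQ[i]$ to which $Z$ is birational. Note also the technical point you would still need even with a correct map: the crepant resolution depends on the order in which double lines are blown up, so the paper modifies the resolution (blowing up the union of all double lines at once) to make the isomorphism of singular double covers lift to the resolutions. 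If you want to repair your argument, the realistic route is to abandon the attempt to rationalize the branch planes and instead look for an isomorphism $Z\to\sigma(Z)$ satisfying the degree-two cocycle condition, as the paper does.
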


\begin{proof}
The standard crepant resolution of a double octic proceeds as follows:
 blow-up successively  fivefold
points, triple lines, fourfold points and double lines. 
The
resolution depends on the order of blow-ups of double lines; to
overcome this subtlety we modify the last step and blow-up the union
of all double lines in the singular double cover (cf. \cite{CS}). 
Then the map 
\[(x,y,z,v,u)\mapsto(\zeta\,x,-\zeta\,x-\zeta\,y,- \left( \zeta+1
  \right) x-y- \left( \zeta+1 \right) z,- \left( \zeta+1 \right)
  \left( x+y+z-t \right) ,iu)\] 
defines an isomorphism of $Z$ and its Galois conjugate over $K$, hence
$Z$ isomorphic to a variety defined over $\mathbb Q[i]$ by the Weil
Galois Descent Theorem (\cite[Thm.~3]{Weil}). 
\end{proof}

We can count points over $\mathbb F_{p}$ only if $p\equiv 1\pmod6$,
i.e. $p$ is a split prime in $K$. Above a given split prime $p$ there are two prime ideals
$\mathfrak p$ in the ring of integers of $\QQ[\sqrt{-3}]$;
this corresponds to two choices for $\zeta\in\FF_\p$ and two
possibilities for the trace of Frobenius on $H^3(\bar Z_\p)$ which we list in the next table.

\[
\begin{array}{|r|r|r|r|r|r|}
\hline p&\zeta&
\operatorname{Tr}(\operatorname{Frob}_{\mathfrak p})&
\zeta&\operatorname{Tr}(\operatorname{Frob}_{\mathfrak p})\\ 
\hline 7 &     4 & -12  &   2  &12    \\
\hline 13&     3 & -58   &   9  &-58  \\
\hline 19&     11& -136 &   7  &136   \\
\hline 31&     25& 20   &   5  &-20   \\
\hline 37&     26& -18  &   10 &-18   \\
\hline 43&     6 & -200 &   36 &200   \\
\hline 61&     47& -458  &   13 &-458 \\
\hline 67&     29& -496 &   37 &496   \\
\hline 73&     64& 602 &   8  &602    \\
\hline 79&     55& 1108 &   23 &-1108 \\ 
\hline 97&     61& -206  &   35 &-206 \\
\hline
\end{array}
\]
The computed traces agree up to sign with the Fourier coefficients of a
modular form $f$ of weight 4 for $\Gamma_{0}(72)$
:(72/1 in Meyer's notation in \cite{Mey}):
\[\begin{array}{r||r|r|r|r|r|r|r|r|r|r|r}
    p&7&13&19&31&37&43&61&67&73&79&97\\\hline
    a_{p}& -12& 58&-136& 20& -18&  -200& -458& -496& -602& 1108&  206
\end{array}
\]
We compare the signs with characters from the table in the proof of
Proposition~\ref{prop1.3} to notice that the sign changes are governed
by the character corresponding to the extension $\QQ[\sqrt[4]{-3}]/\QQ[\sqrt{-3}]$. 

\begin{thm}
\label{thm3}
  Consider the Galois representation of $\operatorname{Gal}(\bar \QQ/\QQ[\sqrt{-3}])$ on
  $H^{3}_{et}(\bar Z,\QQ_{l})$  
  and the one
  associated to the modular form $f$ 
  restricted to $\QQ[\sqrt{-3}]$ and then twisted by the quadratic character associated to
  the extension $\QQ[\sqrt[4]{-3}]/\QQ[\sqrt{-3}]$. 
  Then the Galois representations have isomorphic semi-simplifications.
\end{thm}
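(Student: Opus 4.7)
The plan is to apply Proposition~\ref{prop1.3} to the two continuous two-dimensional $2$-adic Galois representations
\[
\rho_1,\rho_2:\operatorname{Gal}(\bar\QQ/\QQ[\sqrt{-3}])\lra\operatorname{GL}_2(\QQ_2),
\]
where $\rho_1$ is the representation on $H^3_{et}(\bar Z,\QQ_2)$ and $\rho_2$ is the restriction to $K=\QQ[\sqrt{-3}]$ of the Galois representation $\rho_f$ attached to $f$, twisted by the quadratic character $\chi$ associated to $\QQ[\sqrt[4]{-3}]/K$ (precisely the character appearing in the column labelled $\sqrt{-3}$ of the character table in the proof of Proposition~\ref{prop1.3}).

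First I would verify that both $\rho_1$ and $\rho_2$ are unramified outside $S=\{2,\sqrt{-3}\}$. For $\rho_1$ this uses that $2$ is the only prime of bad reduction of $Z$; for $\rho_2$ the level $72=2^3\cdot 3^2$ of $f$ combined with the fact that $\sqrt{-3}\,\OO_K$ is the only prime of $\OO_K$ above $3$, together with the observation that twisting by $\chi$ can only introduce ramification at primes of $\{2,\sqrt{-3}\}$, yields the same conclusion. Next I would check condition~2: for $\rho_1$, rigidity of $Z$ and the fact that $\operatorname{Pic}(Z)$ is generated by divisors defined over $K$ force $\det\rho_1=\chi_{\mathrm{cyc}}^3$; for $\rho_2$, the trivial nebentypus and weight $4$ of $f$ give $\det\rho_f=\chi_{\mathrm{cyc}}^3$, a property preserved under base change and under twisting by a quadratic character. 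Hence both determinants coincide and take odd values at primes of $\OO_K$ away from~$2$.

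For condition~1, I must show that $\operatorname{Tr}\rho_i(\operatorname{Frob}_{\mathfrak p})$ is even for each $\mathfrak p\in U=\{11,\sqrt{-3}-2,1+2\sqrt{-3},\sqrt{-3}-4\}$. At the three split primes above $7,13,19$, this follows at once from the displayed table of Frobenius traces on the geometric side and from the Fourier coefficients $a_7=-12$, $a_{13}=58$, $a_{19}=-136$ of $f$ on the modular side. The inert prime $11$ requires extra work: on the modular side the trace at $\mathfrak p=11\OO_K$ equals $a_{11}^2-2\cdot 11^3$ (the twist contributes $\chi(\operatorname{Frob}_{\mathfrak p})=\chi(\operatorname{Frob}_{11})^2=1$), whose parity matches that of $a_{11}^2$; on the geometric side one must carry out a point count of the singular double octic \eqref{eq:Z} over $\FF_{121}$ and add the correction terms from the resolution. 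I expect this inert-prime computation to be the main technical obstacle, on account of the cost of the point count and the delicate accounting for exceptional divisors \eqref{eq:E} over the fourfold points.

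Finally, condition~3 demands the exact matching of traces and determinants at every $\mathfrak p\in T=\{\sqrt{-3}\pm 2,\,1+2\sqrt{-3},\,\sqrt{-3}\pm 4,\,5+2\sqrt{-3},\,5+4\sqrt{-3}\}$, all of split type with norms $7,7,13,19,19,37,73$. The two prime ideals above a split rational prime $p$ correspond to the two choices of primitive cube root $\zeta\in\FF_p$, and the associated geometric traces are precisely the two values recorded in the table preceding the theorem. On the modular side the trace at $\mathfrak p$ equals $\chi(\operatorname{Frob}_{\mathfrak p})\cdot a_p$; reading off $\chi(\operatorname{Frob}_{\mathfrak p})$ from the $\sqrt{-3}$-column of the character table in Proposition~\ref{prop1.3} reproduces precisely the sign pattern observed in the discussion preceding the theorem, so the traces agree $\mathfrak p$ by $\mathfrak p$, and the determinants agree as already verified. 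With all three hypotheses of Proposition~\ref{prop1.3} in place, the semisimplifications of $\rho_1$ and $\rho_2$ are isomorphic.
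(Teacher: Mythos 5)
Your proposal is correct and follows the paper's strategy exactly in outline: identify the two $2$-adic representations, check ramification and determinants, and feed the trace data into Proposition~\ref{prop1.3}, with the sign matching at the primes of $T$ governed by the $\sqrt{-3}$-column of the character table (equivalently, by whether $\sqrt{-3}=2\zeta+1$ is a square in the residue field). The one place where you diverge is the inert prime $11\in U$, which you correctly single out as the only nontrivial point but propose to settle by a point count of the singular double octic over $\FF_{121}$ plus the resolution corrections. The paper avoids this computation entirely: it observes that \emph{any} double octic arising from an arrangement of $8$ planes satisfying \eqref{eq:cond} has an even number of points over every finite field of odd order, by an elementary combinatorial argument; since each blow-up correction term ($q^{2}+q$, respectively the contribution of the exceptional surface $E$ over a fourfold point) is also even for odd $q$, and the even-degree cohomology contributes $1+q\,b_2+q^{2}b_4+q^{3}\equiv 0\pmod 2$, the Lefschetz formula gives $\operatorname{Tr}(\rho_1(\Frob_{11}))\equiv 0\pmod 2$ with no counting at all; on the modular side one only needs $a_{11}=64$ even, exactly as you note. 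Your route would also work --- the count over $\FF_{121}$ is feasible, and in fact for parity purposes the ``delicate accounting'' you worry about is harmless since every correction term is even --- but the parity argument is both cheaper and more robust, and is worth knowing since it applies verbatim to any inert prime one might need in $U$.
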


\begin{proof}
In view of Proposition~\ref{prop1.3},
compared to the present data,
it suffices to check the following two properties:
\begin{itemize}
\item
 $\sqrt{-3} = 2\zeta+1$ is a square in $\FF_p$
if and only if the corresponding choice of $\operatorname{Tr}(\operatorname{Frob}_{\mathfrak p})$
matches the Fourier coefficient $a_p$ of $f$;
\item
the Galois representations have even trace at $p=11$.
\end{itemize}
The latter condition follows easily since $a_{11}=64$ and
any double octic Calabi--Yau threefold given by an arrangement of 8 planes
satisfying  condition \eqref{eq:cond}
 is checked to have an even number of points over any finite field of odd parity
 by an elementary combinatorial argument.
\end{proof}


\end{document}